\numberwithin{figure}{section} 
\newcommand{\field}[1]{\mathbb{#1}}
\newcommand{\C}{\field{C}}
\numberwithin{equation}{section}
\newtheorem{theorem}{\textbf{Theorem}}
\numberwithin{theorem}{section}
\newtheorem{corollary}[theorem]{\textbf{Corollary}}
\newtheorem{proposition}[theorem]{\textbf{Proposition}}
\theoremstyle{definition}
\newtheorem{remark}{Remark}[section]
\newtheorem{example}[theorem]{Example}
\newcommand{\bea}{\begin{eqnarray}} 
\newcommand{\eea}{\end{eqnarray}} 
\newcommand{\be}{\begin{equation}} 
\newcommand{\ee}{\end{equation}} 
\newcommand{\benn}{\begin{equation*}} 
\newcommand{\eenn}{\end{equation*}}
\title[short title]{On integral structure types}
\author{James Fullwood}
\address{School of Mathematical Sciences\\Shanghai Jiao Tong University\\ 800 Dongchuan Road, Shanghai, China}
\email{fullwood@sjtu.edu.cn}
\begin{document}

\maketitle
\begin{abstract}
We introduce \emph{integral structure types} as a categorical analogue of virtual combinatorial species. Integral structure types then categorify power series with possibly negative coefficients in the same way that combinatorial species categorify power series with non-negative rational coefficients. The notion of an operator on combinatorial species naturally extends to integral structure types, and in light of their `negativity' we define the notion of the \emph{commutator} of two operators on integral structure types. We then extend integral structure types to the setting of stuff types as introduced by Baez and Dolan, and then conclude by using integral structure types to give a combinatorial description for Chern classes of projective hypersurfaces.  
\end{abstract}

\section{Introduction}\label{intro}
The theory of combinatorial species was first introduced by Joyal in 1981 as a sort of categorification of generating functions in enumerative combinatorics, where formal power series corresponding to generating functions are replaced by a category of structures one may put on finite sets \cite{Joyal1}. In particular, a combinatorial species -- which we will refer to as a \emph{structure type} -- is a covariant functor 
\[
F:\mathfrak{Fin}_0\to \mathfrak{Fin},
\]
where $\mathfrak{Fin}_0$ denotes the category of finite sets with only bijections as morphisms, and $\mathfrak{Fin}$ denotes the category of finite sets with arbitrary functions between them. Structure types then form the objects of a category, with morphisms corresponding to natural transformations of functors. The image $F(S)$ of a finite set $S$ under $F$ is thought of as the set of `$F$-structures' one may put on $S$. For example, typical structures one may encounter on finite sets include simple graphs, linear orderings, cycles and partitions. Given a structure type $F$, functoriality implies that the number of $F$-structures one may put on a finite set $S$ only depends on the cardinality of $S$, so that it makes sense to `count' the number of $F$-structures one may put on a set with $n$ elements for every $n\in \mathbb{N}$. We may then associate with $F$ a sequence of non-negative integers $F_n$, where $F_n$ denotes the number of $F$-structures one may put on an $n$-element set.  As such, we denote by $F(z)$ the formal power series
\[
F(z)=\sum_{n=0}^{\infty}\frac{F_n}{n!}z^n,
\]
so that $F(z)$ is the exponential generating series for $F$-structures. It turns out that we can add and multiply structure types in such a way that corresponds to categorical lifts of the usual monoidal operations of addition and multiplication of power series. In particular, if we denote addition and multiplication of structure types by $+$ and $\cdot$ respectively, we have
\[
(F+G)(z)=F(z)+G(z), \quad \text{and} \quad (F\cdot G)(z)=F(z)\cdot G(z),
\]
so that the map which takes a structure type to its associated exponential generating series obeys the same properties of the cardinality function on finite sets with respect to product and coproduct. The category of structure types is then a categorification of formal power series with non-negative rational coefficients, as it replaces equations between power series by isomorphisms in the category of structure types. 

For a typical example of how an isomorphism between structure types can yield an elegant solution to a counting problem, consider the structure of binary rooted trees, which we denote by $B$. Then if we denote the structure of being a 1-element set by $Z$, there exists the following isomorphism of structure types 
\[
B\cong\bold{1}+Z\cdot B^2,
\]
where $\bold{1}$ denotes the identity with respect to product of structure types (which is the structure of being a set with no elements). Taking cardinalities then yields a non-trivial identity in power series, namely
\[
B(z)=1+zB(z)^2,
\] 
which after solving for $B(z)$ and taking a Taylor expansion yields
\[
B(z)=\sum_{n=0}^{\infty}\frac{1}{n+1}\binom{2n}{n}z^n.
\]

To extend the relationship between structure types and power series to include power series with possibly negative coefficients, the theory of `virtual species' was introduced and studied in papers \cite{Joyal2}\cite{Yeh1}, which we refer to as \emph{virtual structure types}. The construction of virtual structure types is analogous to constructing the integers from $\mathbb{N}$, i.e., virtual structure types are equivalence classes of pairs of structure types $(F,G)$, where $(F,G)\sim (F',G')$ if and only if $F+G'\cong F'+G$. The associated power series of a virtual structure type $(F,G)$ is then given by
\[
(F,G)(z)=F(z)-G(z).
\]

While virtual structure types extend the dictionary between power series and combinatorial structures on finite sets, in taking equivalence classes of pairs of structure types we lose the categorical feature of structure types, as virtual structure types form a commutative ring as opposed to a category. The purpose of this note is then to modify the notion of virtual structure type in such a way that we end up with a category as opposed to a ring. What we end up with is an extension of the category of structure types whose objects we refer to as \emph{integral structure types}, which may be viewed as a categorification of power series with possibly negative coefficients.

In what follows, we first review the theory of structure types in \S\ref{ST}. We then introduce a category $\mathfrak{Int}$ in \S\ref{IS} which we refer to as the category of \emph{integral sets}, which is an extension of the category $\mathfrak{Fin}$ to include sets with negative cardinality. In \S\ref{IST} we use the category $\mathfrak{Int}$ to define integral structure types. In particular, we define an integral structure type to be a functor 
\[
\Phi:\mathfrak{Fin}_0\to \mathfrak{Int},
\]
so that integral structure types are constructed by simply enlarging the target category of structure types. The category of structure types then naturally embeds in the category of integral structure types, and the monoidal structures on structure types carry over in a straightforward manner to monoidal structures on integral structure types and their associated power series. In \S\ref{OIST} we define \emph{operators} on integral structure types, which naturally leads to the notion of the \emph{commutator} $[\mathscr{A},\mathscr{B}]$ of two operators on integral structure types $\mathscr{A}$ and $\mathscr{B}$. With a view towards applying the theory of structure types to mathematical structures in quantum mechanics, Baez, Dolan and Morton have generalized structure types to what they refer to as `stuff types' \cite{BaezDolan}\cite{Morton}, and in \S\ref{STF} we show how integral structure types may be generalized to `integral stuff types'. We then conclude in \S\ref{CCC} by using operators on integral structure types to give a combinatorial description for Chern classes of projective hypersurfaces. In particular, the topological Euler characteristic of a hypersurface in complex projective space $\mathbb{P}^n$ is a polynomial in its degree, which we refer to as the $n$th \emph{Euler polynomial}. We then view the $n$th Euler polynomial as the generating series of a certain integral structure type, and then define an operator on integral structure types such that the associated generating series of successive iterations of this operator on the structure type corresponding to the $n$th Euler polynomial yields the total Chern class of projective hypersurfaces.

\section{Structure types}\label{ST}      
We recall that a \emph{structure type} is a covariant functor
\[
F:\mathfrak{Fin}_0\to \mathfrak{Fin},
\]
where $\mathfrak{Fin}_0$ is the category of finite sets but with only bijections as morphisms (so that $\mathfrak{Fin}_0$ is in fact a groupoid), and $\mathfrak{Fin}$ is the category of finite sets with arbitrary maps between them. We denote the associated functor category by $\mathfrak{Fin}[Z]$, and recall that with every structure type $F$ we associate a formal power series given by
\[
F(z)=\sum_{n=0}^{\infty}\frac{F_n}{n!}z^n,
\]
where $F_n$ is the number of $F$-structures one may put on an $n$-element set. We then refer to $F(z)$ as the \emph{generating series} of $F$. Addition and multiplication of structure types may be defined in such a way that is consistent with addition and multiplication of power series. In particular, the \emph{sum} of two structure types $F$ and $G$ is denoted by $F+G$, and is given by
\[
(F+G)(S)=F(S)\sqcup G(S).
\]
The structure type $F+G$ then acts on a finite set $S$ by putting either an $F$-structure or a $G$-structure on $S$, and the identity object with respect to $+$ is the structure type $\bold{0}$ of being a set with infinitely many elements (so that $\bold{0}(S)=\varnothing$ for all finite sets $S$). The \emph{product} of two structure types $F$ and $G$, which we denote by $F\cdot G$, is given by
\[
(F\cdot G)(S)=\{(F(S_1),G(S_2)) \hspace{1mm}| \hspace{1mm} S=S_1\sqcup S_2\},
\]
so that $F\cdot G$ acts on a finite set $S$ by partitioning it into two disjoint subsets, and putting an $F$-structure on what was labeled the first subset and a $G$-structure on what was labeled the second one. The identity object with respect to product of structure types is denoted $\bold{1}$, which is the structure of being a set with no elements, so that 
\[
\bold{1}(S)=\begin{cases} \{\varnothing\} \quad \text{if $S$ is the empty set} \\\hspace{0.19cm} \varnothing \quad \quad \quad \text{otherwise} \\ \end{cases}
\]
It is then straightforward to show
\[
(F+G)(z)=F(z)+G(z), \quad \text{and} \quad (F\cdot G)(z)=F(z)\cdot G(z),
\]
so that the map which takes a structure type to its exponential generating series is a decategorification map for structure types. As such, sums and products of structure types are categorifications of addition and multiplication of formal power series. 

\begin{remark}
There are at least six monoidal operations on structure types which admit useful combinatorial descriptions (see \cite{Yorgey}, Chapter~4), as well as other notions of generating series associated with structure types. For our purposes however, we only need the monoidal operations of sum and product, and from our perspective the notion of exponential generating series seems the most natural for the theory of structure types.  
\end{remark}

\begin{example}\label{e1}
Denote by $Z^k$ the structure type of being a totally ordered $k$-element set. So for example if $S=\{a,b,c\}$, then
\[
Z^3(S)=\{(a,b,c),(a,c,b),(b,a,c),(b,c,a),(c,a,b),(c,b,a)\}.
\]
By definition, $Z^k$ of a set with cardinality not equal to $k$ is the empty set, and if $S$ denotes a set of cardinality $k$ then $\chi(Z^k(S))=k!$,
so
\[
Z^k(z)=\sum_{n=0}^{\infty}\frac{(Z^k)_n}{n!}z^n=\frac{k!}{k!}z^k=z^k.
\]
Now denote by $C^k$ the structure of being a cycle of length $k$, so that if $S$ is a set with $k$-elements then $C^k(S)=Z^k(S)/\mathbb{Z}_k$, where $\mathbb{Z}_k$ is identified with the subgroup of the symmetric group on $k$ elements corresponding to the cyclic permutations. As such, the map which takes an ordering on $S$ to its associated cycle induces a natural transformation of functors $Z^k\to C^k$.
\end{example}

A structure type $F$ is said to be of \emph{degree} $n$ if $F$ acts non-trivially only on sets of cardinality $n$. For example, $Z^n$ is of degree $n$. With any structure type $F$ we can then associate a countable sequence of structure types $(F_{(0)},F_{(1)},F_{(2)},\ldots)$, where $F_{(n)}$ is of degree $n$, so that if $S$ is a finite set with $n$ elements we have $F(S)=F_{(n)}(S)$ for all $n\in \mathbb{N}$. In such a case, we will often refer to the sequence $(F_{(0)},F_{(1)},F_{(2)},\ldots)$ as the \emph{expansion} of $F$, and $F_{(n)}$ as the \emph{degree n component} of $F$. Conversely, given a countable sequence of structure types $(F_{(0)},F_{(1)},F_{(2)},\ldots)$ with $F_{(n)}$ of degree $n$, one may associate a unique structure type $F$ such that if $S$ is a $n$-element set, then $F(S)=F_{(n)}(S)$. For example, the structure type associated with the sequence $(\bold{1},Z,Z^2,\ldots)$ is the structure of being a totally ordered set (irrespective of its cardinality), which we denote by $\mathscr{O}$. We then have
\[
\mathscr{O}(z)=\sum_{n=0}^{\infty}z^n=\frac{1}{1-z}.
\]
\section{The category of integral sets}\label{IS}
An \emph{integral set} is an ordered pair of finite sets $X=(X^+,X^-)$, and we refer to $X^+$ as the \emph{positive part} and $X^-$ as the \emph{negative part} of $X$ respectively. The union and intersection of integral sets are then defined component wise
\[
X\cup Y=(X^+\cup Y^+,X^-\cup Y^-),\quad \quad X\cap Y=(X^+\cap Y^+,X^-\cap Y^-).
\]
An intergral set $X$ is said to be a \emph{subset} of an integral set $Y$ and is written $X\subset Y$ if and only if 
\[
X^+\subset Y^+ \quad \text{and} \quad X^-\subset Y^-.
\]
A map $f:X\to Y$ between integral sets is defined to be a finite set map of the form 
\[
\tilde{f}: X^+\sqcup X^- \to Y^+\sqcup Y^-,
\] 
and $f$ is said to be \emph{coherent} if and only if 
\[
\tilde{f}(X^+)\subset Y^+, \quad \text{and} \quad \tilde{f}(X^-)\subset Y^-.
\]
The \emph{image} of an integral set map $f:X\to Y$ is the integral set given by
\[
\text{im}(f)=(\tilde{f}(X^+),\tilde{f}(X^-)),
\] 
so that $\text{im}(f)\subset Y$ if and only if $f$ is coherent. A coherent integral set map $f$ is said to be injective, surjective or bijective, if and only if the associated finite set map $\tilde{f}$ is, and two integral sets are said to be \emph{isomorphic} if and only if there exists a bijection between them. If an integral set map $f$ is not coherent but $\tilde{f}$ is injective, surjective, or bijective, we then say $f$ is \emph{weakly} injective, \emph{weakly} surjective, or \emph{weakly} bijective respectively. Given an integral set $X$ we let $-X=(X^-,X^+)$, and refer to it as \emph{negative}, or \emph{minus} $X$. It immediately follows that integral sets and maps between them form a category $\mathfrak{Int}$, which we refer to as the \emph{category of integral sets}. The category $\mathfrak{Fin}$ of finite sets and maps between them naturally embeds in $\mathfrak{Int}$ via the maps
\[
S\mapsto (S,\varnothing),\quad \text{or} \quad S\mapsto (\varnothing,S),
\]
the first of which we will refer to as the \emph{standard embedding} of $\mathfrak{Fin}$ in $\mathfrak{Int}$. Under the standard embedding $\mathfrak{Fin}\hookrightarrow \mathfrak{Int}$, a morphism $S\to T$ in $\mathfrak{Fin}$ naturally maps to a coherent map of integral sets. While isomorphism classes in $\mathfrak{Fin}$ are naturally indentified with the natural numbers $\mathbb{N}$, isomorphism classes in $\mathfrak{Int}$ may only be identified with the integers $\mathbb{Z}$ after a suitable equivalence relation, namely,
\begin{equation}\label{f1}
[X]\sim [Y] \quad \text{if and only if} \quad X^+\sqcup Y^-\cong Y^+\sqcup X^-,
\end{equation}
where $\sqcup$ denotes coproduct in $\mathfrak{Fin}$ (i.e., disjoint union), and $\cong$ denotes isomorphic equivalence (i.e., bijection). Under such an equivalence relation, the class of the integral set $(\varnothing,\{a,b,c,d,e\})$ is what we usually think of as $-5$, while the class of $(\{a,b,c\},\{a\})$ is what we usually think of as $2$. We record such observations via the following 
\begin{proposition}\label{t1}
Let $[\mathfrak{Int}]$ denote the set of isomorphism classes of objects in $\mathfrak{Int}$, and let $\sim$ denote the equivalence relation given by $(\ref{f1})$. Then the quotient $[\mathfrak{Int}]/\sim$ is in bijective correspondence with $\mathbb{Z}$. 
\begin{proof}
Since every member of $[\mathfrak{Int}]/\sim$ may be represented by an integral set of the form $(S,\varnothing)$ or $(\varnothing,T)$, the bijection $[\mathfrak{Int}]/\sim\to \mathbb{Z}$ is given by 
\[
(S,\varnothing)\mapsto \chi(S), \quad (\varnothing,T)\mapsto -\chi(T),
\]
where $\chi:\mathfrak{Fin}\to \mathbb{N}$ denotes the cardinality function for finite sets.
\end{proof}  
\end{proposition}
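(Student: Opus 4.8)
The plan is to reduce everything to cardinalities and then recognize the quotient as the familiar Grothendieck completion of $(\mathbb{N},+)$. First I would observe that, since finite sets are classified up to bijection by their cardinality, the isomorphism class of an integral set $X=(X^+,X^-)$ in $\mathfrak{Int}$ is completely determined by the ordered pair of natural numbers $(\chi(X^+),\chi(X^-))$. Concretely, a coherent bijection $X\to Y$ amounts to a finite set bijection $X^+\sqcup X^-\to Y^+\sqcup Y^-$ carrying $X^{\pm}$ to $Y^{\pm}$, which exists precisely when $\chi(X^+)=\chi(Y^+)$ and $\chi(X^-)=\chi(Y^-)$. This yields a bijection $[\mathfrak{Int}]\to\mathbb{N}\times\mathbb{N}$ sending $[X]$ to $(\chi(X^+),\chi(X^-))$.

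Next I would transport the relation $\sim$ through this bijection. Applying $\chi$ to $(\ref{f1})$ and using $\chi(A\sqcup B)=\chi(A)+\chi(B)$, the condition $X^+\sqcup Y^-\cong Y^+\sqcup X^-$ becomes $\chi(X^+)+\chi(Y^-)=\chi(Y^+)+\chi(X^-)$. Writing $(a,b)=(\chi(X^+),\chi(X^-))$ and $(c,d)=(\chi(Y^+),\chi(Y^-))$, this reads $a+d=c+b$, equivalently $a-b=c-d$. Thus under the identification $[\mathfrak{Int}]\cong\mathbb{N}\times\mathbb{N}$ the relation $\sim$ is exactly the standard equivalence used to construct $\mathbb{Z}$ from $\mathbb{N}$.

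With this dictionary in place, I would define the candidate map $\Psi\colon [\mathfrak{Int}]/{\sim} \to \mathbb{Z}$ by $\Psi([X])=\chi(X^+)-\chi(X^-)$ and verify the three standard points. It is well defined because $a-b=c-d$ whenever $(a,b)\sim(c,d)$; it is injective because $a-b=c-d$ forces $a+d=c+b$, so the two classes coincide; and it is surjective since every integer is attained, for instance by a set of cardinality $n$ in the positive slot when $n\geq 0$ and by $(\varnothing,T)$ with $\chi(T)=-n$ when $n<0$. Equivalently, one may bypass $\mathbb{N}\times\mathbb{N}$ entirely and argue by normal forms, noting that each $\sim$-class contains a representative of the shape $(S,\varnothing)$ or $(\varnothing,T)$ and sending these to $\chi(S)$ and $-\chi(T)$ respectively; this is essentially the route the statement's phrasing suggests.

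I do not anticipate any genuine obstacle here. The only steps that warrant care are confirming that $\sim$ really does translate into the additive relation on $\mathbb{N}\times\mathbb{N}$, and that $\Psi$ is independent of the chosen representative of a class. Everything else is the textbook verification that $(\mathbb{N}\times\mathbb{N})/{\sim}$ is $\mathbb{Z}$, so the work lies in setting up the cardinality correspondence cleanly rather than in any hard estimate or construction.
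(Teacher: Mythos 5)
Your proposal is correct and arrives at the same bijection as the paper, namely $[X]\mapsto\chi(X^+)-\chi(X^-)$; the paper simply phrases this via normal-form representatives $(S,\varnothing)$ and $(\varnothing,T)$, a route you also note at the end. If anything, your version is more complete than the paper's one-line argument, since you explicitly verify that $\sim$ transports to the Grothendieck relation on $\mathbb{N}\times\mathbb{N}$ and that the map is well defined, injective, and surjective, whereas the paper leaves these checks implicit.
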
 

An immediate corollary of Proposition~\ref{t1} is that the cardinality function on finite sets $\chi:\mathfrak{Fin}\to \mathbb{N}$ extends to a function $\chi:\mathfrak{Int}\to \mathbb{Z}$, which is compatible with the standard embedding $\mathfrak{Fin}\hookrightarrow \mathfrak{Int}$. The integers then parametrize isomorphism classes in $\mathfrak{Int}$ modulo the equivalence $\sim$, which we refer to as \emph{cardinal equivalence}. As the notion of cardinal equivalence involves an interaction between the positive and negative parts of an integral set, it is less rigid than isomorphic equivalence, which does not involve any interaction between the positive and negative parts of an integral set. In particular, non-isomorphic integral sets may share the same cardinality, such as $(\varnothing,\{a\})$ and $(\{a\},\{a,b\})$ (which both have cardinality $-1$), but as with the case of topological Euler characteristic, two objects with different cardinalities may never be isomorphic. However, two integral sets which are images of objects of $\mathfrak{Fin}$ under the standard embedding $\mathfrak{Fin}\hookrightarrow \mathfrak{Int}$ are isomorphic if and only if they have the same cardinality, so that the notion of cardinality in $\mathfrak{Fin}$ is faithfully preserved. In other words, at the level of objects we have the following commutative diagram.
\[
\xymatrix{
\mathfrak{Fin} \ar[d] \ar[r]  & \mathfrak{Int} \ar[d]\\
\mathbb{N} \ar[r] & \mathbb{Z}\\
}
\]
It also follows that for all integral sets $X$ we have
\[
\chi(-X)=-\chi(X).
\]

Coproducts exist in $\mathfrak{Int}$, which are given by 
\[
X\sqcup Y=(X^+\sqcup Y^+,X^-\sqcup Y^-),
\]
and it immediately follows that the associated inclusion  maps $\iota_X:X\to X\sqcup Y$ and $\iota_Y:Y\to X\sqcup Y$ are coherent. While a categorical product doesn't exist in $\mathfrak{Int}$, it is a distributive monoidal category with respect to the monoidal product given by 
\[
X\otimes Y=((X^+\times Y^+)\sqcup (X^-\times Y^-), (X^+\times Y^-)\sqcup (X^-\times Y^+)).
\]
The identity object with respect to this monoidal product is then $\mathbbm{1}=(\{\star\},\varnothing)$ (where $\{\star\}$ is a one-element set), and is unique up to canonical isomorphism. Given integral sets $X$, $Y$ and $Z$, the associator 
\[
\alpha_{X,Y,Z}:(X\otimes Y)\otimes Z\to X\otimes (Y\otimes Z)
\]
is established by identifying the positive and negative parts of its source and target via canonical isomorphisms in $\mathfrak{Fin}$. For example, the fact that $\mathfrak{Fin}$ is a distributive category allows one to identify the positive parts of the source and target of $\alpha_{X,Y,Z}$ via  the canonical isomorphism in $\mathfrak{Fin}$ between the sets
\[
(X^+\times Y^+\times Z^+) \sqcup (X^+\times Y^-\times Z^-) \sqcup (X^-\times Y^+\times Z^-) \sqcup (X^-\times Y^-\times Z^+),
\]
and 
\[
(X^+\times Y^+\times Z^+) \sqcup (X^-\times Y^-\times Z^+) \sqcup (X^+\times Y^-\times Z^-) \sqcup (X^-\times Y^+\times Z^-).
\]
Moreover, the fact that the pentagon diagram holds for the monoidal operation $\otimes$ follows from similar considerations, i.e., by using the fact that $\mathfrak{Fin}$ is a distributive category to identify positive and negative parts of sources and targets along the diagram via canonical isomorphisms in $\mathfrak{Fin}$.

With respect to cardinality, we then have
\[
\chi(X\sqcup Y)=\chi(X)+\chi(Y), \quad \text{and} \quad \chi(X\otimes Y)=\chi(X)\cdot \chi(Y),
\]
as expected. As the initial object in $\mathfrak{Int}$ is $(\varnothing,\varnothing)$, $-X\sqcup X$ is initial if and only if $X$ is, but from the formulas above we do have 
\[
\chi(-X\sqcup X)=0,
\]
so that $-X\sqcup X$ is cardinally equivalent to the initial object $(\varnothing,\varnothing)$.

Now even though the integers do not parametrize isomorphism classes in $\mathfrak{Int}$, there still is a precise sense in which equations among integers correspond to an explicit case of cardinal equivalence in $\mathfrak{Int}$. For example, the equation
\[
-3\cdot(5-3)+2=-1+(-3)
\]
corresponds to a cardinal equivalence of the form
\[
\left((\varnothing,\{x,y,z\})\times (\{\alpha,\beta,\gamma,\delta,\epsilon\},\{a,b,c\})\right)\sqcup (\{\star,\nu\},\varnothing)\sim (\varnothing,\star)\sqcup (\varnothing,\{x,y,z\}),
\]
where we recall that $\sim$ denotes cardinal equivalence.

\section{Integral structure types}\label{IST}
We now introduce the notion of an `integral structure type', so that we may categorify power series with possibly negative coefficients. An \emph{integral structure type} is a covariant functor 
\[
\Phi:\mathfrak{Fin}_0\to \mathfrak{Int},
\]
so that $\Phi$ acts on a finite set $S$ by the rule
\[
\Phi(S)=(\Phi^+(S),\Phi^-(S)),
\] 
where $\Phi^+$ and $\Phi^-$ are structure types. Integral structure types form the objects of a category $\mathfrak{Int}[Z]$, with morphisms given by natural transformations of functors. For example, a natural transformation $\nu:\Phi\to \Psi$ between two integral structure types associates with every bijection $f:S\to T$ of finite sets a commutative diagram of the form
\[
\xymatrix{
\Phi(S) \ar[d]_{\Phi(f)} \ar[r]^{\nu_S}  & \Psi(S) \ar[d]^{\Psi(f)} \\
\Phi(T) \ar[r]_{\nu_T} & \Psi(T) \\
}
\]
The monoidal operations of sum and product of structure types carry over to $\mathfrak{Int}[Z]$ as follows. The \emph{sum} of two integral structure types $\Phi=(\Phi^+,\Phi^-)$ and $\Psi=(\Psi^+,\Psi^-)$ is given by
\begin{equation}\label{f4}
\Phi+\Psi=(\Phi^++\Psi^+,\Phi^-+\Psi^-),
\end{equation}
and the \emph{product} of $\Phi$ and $\Psi$ is given by
\begin{equation}\label{f2}
\Phi\cdot \Psi=(\Phi^+\cdot \Psi^++\Phi^-\cdot \Psi^-,\Phi^+\cdot \Psi^-+\Phi^-\cdot \Psi^+).
\end{equation}
The identity with respect to sum is $(\bold{0},\bold{0})$, and the identity with respect to product is $(\bold{1},\bold{0})$ (recall that $\bold{0}$ is the structure type of being a set with an infinite number of elements, and $\bold{1}$ is the structure of being a set with no elements). It will be useful to note that structure types naturally embed in the category $\mathfrak{Int}[Z]$ via the map 
\begin{equation}\label{em}
F\mapsto (F,\bold{0}).
\end{equation}
The identity elements with respect to sum and product of structure types then map to the identity elements with respect to sum and product of integral structure types via the map (\ref{em}). We also note that if $F$ is a structure type, the integral structure type $(F,F)$ is \emph{not} isomorphic to $(\bold{0},\bold{0})$. 

The notions of degree and expansion readily extend to integral structure types, so that an expansion of an integral structure types is an ordered pair of expansions of its positive and negative parts. Given an integral structure type $\Phi=(\Phi^+,\Phi^-)$, functoriality along with the notion of cardinality for integral sets yields a well-defined notion of the (possibly negative) number of $\Phi$-structures one may put on an $n$-element set, which we denote by $\Phi_n\in \mathbb{Z}$. It immediately follows that 
\begin{equation}\label{f3}
\Phi_n=\Phi^+_n-\Phi^-_n,
\end{equation}
so that the exponential generating series $\Phi(z)$ associated with the sequence $\Phi_n$ is given by
\[
\Phi(z)=\Phi^+(z)-\Phi^-(z).
\]
As such, if $F$ is a structure type of the form $F=G+H$, then under the embedding of structure types in $\mathfrak{Int}[Z]$, we have 
\[
G(z)=F(z)-H(z),
\]
so that $G(z)$ is a generating series for $F$-structures which are not $H$-structures.

With the aforementioned prescriptions we then arrive at the following
\begin{theorem} 
Let $\Phi$ and $\Psi$ be integral structure types. Then
\[
(\Phi+\Psi)(z)=\Phi(z)+\Psi(z), \quad \text{and} \quad (\Phi\cdot \Psi)(z)=\Phi(z)\cdot \Psi(z).
\]
\begin{proof}
First, let $S_n$ denote an $n$-element set for all $n\in \mathbb{N}$. Then for structure types $F$ and $G$ we have  
\[
(F+G)_n=\chi((F+G)(S_n))=\chi(F(S_n)\sqcup G(S_n))=\chi(F(S_n))+\chi(G(S_n))=F_n+G_n,
\]
and since there are $\sum_{k=0}^n\binom{n}{k}$ ways to split up an $n$-element set into two disjoint subsets, from the definition of product of structure types we have
\[
(F\cdot G)_n=\sum_{k=0}^n\binom{n}{k}F_k\cdot G_{n-k}.
\]
Now let $\Phi=(\Phi^+,\Phi^-)$ and $\Psi=(\Psi^+,\Psi^-)$ be two integral structure types. Then by (\ref{f4}) and (\ref{f3}) we have
\begin{eqnarray*}
(\Phi+\Psi)_n&=&(\Phi+\Psi)^+_n-(\Phi+\Psi)^-_n \\
                     &=&(\Phi^++\Psi^+)_n-(\Phi^-+\Psi^-)_n \\
                     &=&\Phi^+_n+\Psi^+_n-(\Phi^-_n+\Psi^-_n) \\
                     &=&(\Phi^+_n-\Phi^-_n)+(\Psi^+_n-\Psi^-_n) \\
                     &=&\Phi_n+\Psi_n,
\end{eqnarray*}
so that
\begin{eqnarray*}
(\Phi+\Psi)(z)&=&\sum_{n=0}^{\infty}\frac{(\Phi+\Psi)_n}{n!}z^n \\
                          &=&\sum_{n=0}^{\infty}\frac{\Phi_n+\Psi_n}{n!}z^n \\
                          &=&\sum_{n=0}^{\infty}\frac{\Phi_n}{n!}z^n+\sum_{n=0}^{\infty}\frac{\Psi_n}{n!}z^n \\
                          &=&\Phi(z)+\Psi(z).
\end{eqnarray*}
Then for the product $\Phi\cdot \Psi$, combining our previous calculations with (\ref{f2}) yields 
\begin{eqnarray*}
(\Phi\cdot \Psi)_n&=&(\Phi\cdot \Psi)^+_n-(\Phi\cdot \Psi)^-_n \\
                           &=&(\Phi^+\cdot \Psi^++\Phi^-\cdot\Psi^-)_n-(\Phi^+\cdot \Psi^-+\Phi^-\cdot \Psi^+)_n \\
                           &=&(\Phi^+\cdot \Psi^+)_n+(\Phi^-\cdot \Psi^-)_n-((\Phi^+\cdot \Psi^-)_n+(\Phi^-\cdot \Psi^+)_n) \\
                           &=&\sum_{k=0}^n\binom{n}{k}(\Phi^+_k\cdot \Psi^+_{n-k}+\Phi^-_k\cdot \Psi^-_{n-k})-\sum_{k=0}^n\binom{n}{k}(\Phi^+_k\cdot \Psi^-_{n-k}+\Phi^-_k\cdot \Psi^+_{n-k}) \\
                           &=&\sum_{k=0}^n\binom{n}{k}(\Phi^+_k\cdot \Psi^+_{n-k}+\Phi^-_k\cdot \Psi^-_{n-k}-\Phi^+_k\cdot \Psi^-_{n-k}-\Phi^-_k\cdot \Psi^+_{n-k}) \\
                           &=&\sum_{k=0}^n\binom{n}{k}(\Phi^+_k-\Phi^-_k)\cdot (\Psi^+_{n-k}-\Psi^-_{n-k}) \\
                           &=&\sum_{k=0}^n\binom{n}{k}\Phi_k\cdot \Psi_{n-k}.
\end{eqnarray*}
And from the formula for the product of exponential generating series, namely
\[
\left(\sum_{n=0}^{\infty}\frac{a_n}{n!}z^n\right)\cdot \left(\sum_{n=0}^{\infty}\frac{b_n}{n!}z^n\right)=\sum_{n=0}^{\infty}\left(\frac{\sum_{k=0}^n\binom{n}{k}a_kb_{n-k}}{n!}\right)z^n,
\] 
we have 
\begin{eqnarray*}
(\Phi\cdot \Psi)(z)&=&\sum_{n=0}^{\infty}\frac{(\Phi\cdot \Psi)_n}{n!}z^n \\
                             &=&\sum_{n=0}^{\infty}\left(\frac{\sum_{k=0}^n\binom{n}{k}\Phi_k\cdot \Psi_{n-k}}{n!}\right)z^n \\
                             &=&\left(\sum_{n=0}^{\infty}\frac{\Phi_n}{n!}z^n\right)\cdot \left(\sum_{n=0}^{\infty}\frac{\Psi_n}{n!}z^n\right) \\
                             &=&\Phi(z)\cdot \Psi(z),
\end{eqnarray*}
as desired.
\end{proof}
\end{theorem}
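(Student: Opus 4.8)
The plan is to reduce both identities to the corresponding statements for ordinary structure types established in \S\ref{ST}, rather than recompute everything at the level of coefficients. The central tool is the formula $\Phi(z)=\Phi^+(z)-\Phi^-(z)$ recorded after (\ref{f3}), together with the additivity and multiplicativity of the generating series map $F\mapsto F(z)$ on structure types, i.e.\ $(F+G)(z)=F(z)+G(z)$ and $(F\cdot G)(z)=F(z)\cdot G(z)$.

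First I would dispatch the sum. Applying the definition (\ref{f4}) and then $\Phi(z)=\Phi^+(z)-\Phi^-(z)$ gives
\[
(\Phi+\Psi)(z)=(\Phi^++\Psi^+)(z)-(\Phi^-+\Psi^-)(z),
\]
and invoking $(F+G)(z)=F(z)+G(z)$ on each term, followed by a regrouping of the four resulting series into $\big(\Phi^+(z)-\Phi^-(z)\big)+\big(\Psi^+(z)-\Psi^-(z)\big)$, yields $\Phi(z)+\Psi(z)$. This is bookkeeping with signs and needs no new idea.

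The product is the interesting case, and the main obstacle is seeing that the sign rule encoded in (\ref{f2}) is exactly what is needed to factor a difference of products into a product of differences. Starting from (\ref{f2}), the generating series of $\Phi\cdot\Psi$ is that of its positive part minus that of its negative part, namely
\[
(\Phi^+\cdot\Psi^++\Phi^-\cdot\Psi^-)(z)-(\Phi^+\cdot\Psi^-+\Phi^-\cdot\Psi^+)(z).
\]
Applying the structure-type identities to each of the four summands turns this into
\[
\Phi^+(z)\Psi^+(z)+\Phi^-(z)\Psi^-(z)-\Phi^+(z)\Psi^-(z)-\Phi^-(z)\Psi^+(z).
\]
The crux is then to recognize this expression as the factorization $\big(\Phi^+(z)-\Phi^-(z)\big)\big(\Psi^+(z)-\Psi^-(z)\big)$, which is precisely $\Phi(z)\cdot\Psi(z)$ by the definition of the generating series of an integral structure type. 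Thus the full statement follows once the structure-type case is granted, and no convolution computation at the level of coefficients is actually required.

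I would close by remarking that this route makes transparent why the particular sign pattern in (\ref{f2}) is forced: it is the unique bilinear assignment of the four products $\Phi^{\pm}\cdot\Psi^{\pm}$ to positive and negative parts whose decategorification reproduces integer multiplication, mirroring the rule of signs $(a-b)(c-d)=(ac+bd)-(ad+bc)$.
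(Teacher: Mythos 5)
Your proof is correct, and it takes a somewhat different route from the paper's. The paper works entirely at the level of coefficients: it first derives $(F+G)_n=F_n+G_n$ and the binomial convolution $(F\cdot G)_n=\sum_{k=0}^n\binom{n}{k}F_kG_{n-k}$ for ordinary structure types, then computes $(\Phi+\Psi)_n$ and $(\Phi\cdot\Psi)_n$ term by term, performing the rule-of-signs factorization $\Phi^+_k\Psi^+_{n-k}+\Phi^-_k\Psi^-_{n-k}-\Phi^+_k\Psi^-_{n-k}-\Phi^-_k\Psi^+_{n-k}=(\Phi^+_k-\Phi^-_k)(\Psi^+_{n-k}-\Psi^-_{n-k})$ inside the sum before reassembling the series. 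You instead take the structure-type identities $(F+G)(z)=F(z)+G(z)$ and $(F\cdot G)(z)=F(z)G(z)$ as given and do the identical factorization once, at the level of whole power series, via $\Phi(z)=\Phi^+(z)-\Phi^-(z)$. The underlying algebraic content is the same in both arguments; what your version buys is brevity and a clear conceptual point (which you state explicitly) that the sign pattern in the definition of the product of integral structure types is exactly the rule of signs for integer multiplication. What the paper's version buys is self-containedness: \S 2 only asserts the structure-type identities as ``straightforward to show,'' and the coefficient computation in the paper's proof actually supplies that missing verification, in particular the counting argument that there are $\binom{n}{k}$ ways to split an $n$-element set into an ordered pair of subsets of sizes $k$ and $n-k$. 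If you wanted your shorter argument to stand fully on its own, you would add a line proving (or at least citing a proof of) the convolution formula for products of structure types.
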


In \cite{Joyal2}, Joyal defines the ring of \emph{virtual species}, which we refer to as \emph{virtual structure types}. Virtual structure types are simply ordered pairs of structure types with sum and product as given above for integral structure types, modulo the equivalence relation 
\[
\Phi=(\Phi^+,\Phi^-) \sim \Psi=(\Psi^+,\Psi^-) \quad \text{if and only if} \quad \Phi^++\Psi^-=\Phi^-+\Psi^+.
\] 
Integral structure types are then essentially virtual structure types without the aforementioned equivalence relation $\sim$, and thus contain more information than virtual structure types. Moreover, if $[\Phi]=[\Psi]$ as virtual structure types then for every finite set $S$ we have $\chi(\Phi(S))=\chi(\Psi(S))$, so that $\Phi(z)=\Psi(z)$. As such, the map which takes an integral structure type to its exponential generating series factors through the map which takes in integral structure type to its virtual structure type. When working with virtual structure types we will use the notation $F-G$ for the equivalence class of the pair $(F,G)$, since virtual structure types admit features of subtraction. In particular, if $F=G+H$ as structure types, then the virtual structure type $F-G$ is equivalent to $H-\bold{0}$, while the integral structure type $(F,G)$ is not isomorphic to $(H,\bold{0})$ for $G\neq \bold{0}$. 

\section{Operators on integral structure types}\label{OIST}
In the theory of generating functions certain operators on power series play a crucial role, such as the differentiation operator. As such, we first recall how the differentiation operator is lifted to structure types, and then give a general definition of operators on integral structure types. Our definition then naturally leads to the notion of the commutator of two operators on integral structure types, which will be useful for categorifying algebras of operators on power series.  

So let $F$ be a structure type. The \emph{derivative} of $F$, denoted $DF$, is a structure type which acts on a finite set $S$ by putting an $F$-structure on $S\sqcup \{\star\}$, where $\{\star\}$ denotes a set with a single element. The reason why such an operation on structure types is a combinatorial analogue of differentiation is evidenced by the following
\begin{example}\label{de1}
Let $F=Z^3$. Then
\begin{eqnarray*}
DZ^3(\{a,b\})&=&Z^3(\{a,b,\star\}) \\
                   &=&\{(a,b,\star),(a,\star,b),(b,a,\star),(b,\star,a),(\star,a,b),(\star,b,a)\} \\
                   &\cong& \{(b,\star),(\star,b)\}\sqcup \{(a,\star),(\star,a)\}\sqcup \{(a,b),(b,a)\} \\
                   &=&Z^2(\{b,\star\})\sqcup Z^2(\{a,\star\})\sqcup Z^2(\{a,b\}) \\
                   &\cong&3Z^2(\{a,b\}),
\end{eqnarray*}
thus $DZ^3\cong 3Z^2$. As such, we have
\[
(DZ^3)(z)=(3Z^2)(z)=3z^2=\frac{d}{dz}z^3.
\]
\end{example}

\begin{example}\label{ce1}
Let $C^4$ denote the structure of being a cycle of length $4$. A cycle will be denoted by an ordered tuple enclosed in rectangular brackets, such as $[w,x,y,z]$. Two ordered tuples then represent the same cycle if and only if they differ by a cyclic permutation. We then have
\begin{eqnarray*}
DC^4(\{x,y,z\})&=&C^4(\{x,y,z,\star\}) \\
                     &=&\{[x,y,z,\star],[x,z,y\star],[y,x,z,\star],[y,z,x,\star],[z,x,y,\star],[z,y,x,\star]\} \\
                     &\cong& \{(x,y,z),(x,z,y),(y,x,z),(y,z,x),(z,x,y),(z,y,x)\} \\
                     &=&Z^3(\{x,y,z\}),
\end{eqnarray*}
thus $DC^4\cong Z^3$.
\end{example}

More generally, one may show that for any structure type $F$ we have
\[
DF(z)=\frac{d}{dz}F(z),
\]
so that indeed the operator $D$ is a categorification of differentiation of power series. The differentiation operator may then be viewed as an endofunctor
\[
D:\mathfrak{Fin}[Z]\to \mathfrak{Fin}[Z],
\]
such that
\[
D(F+G)=D(F)+D(G) \quad \text{and} \quad D(F\cdot G)=D(F)\cdot G+F\cdot D(G).
\]
Moreover, if $F\to G$ is a natural transformation of structure types, and $S\to T$ is a bijection of finite sets, then the morphism $D(F\to G)$ corresponds to a morphism of diagrams of the form
\[
\xymatrix{
F(S) \ar[d] \ar[r]  & G(S) \ar[d] \\
F(T) \ar[r] & G(T) \\
}
\quad
\implies
\quad
\xymatrix{
F(S\sqcup \{\star\}) \ar[d] \ar[r]  & G(S\sqcup \{\star\}) \ar[d] \\
F(T\sqcup \{\star\}) \ar[r] & G(T\sqcup \{\star\}) \\
}
\]
Taking the differentiation operator as a representative example, we define an \emph{operator} on a structure type to be an endo-functor of the form
\begin{equation}\label{odst}
\mathscr{A}:\mathfrak{Fin}[Z]\to \mathfrak{Fin}[Z],
\end{equation}
such that $\mathscr{A}(F+G)=\mathscr{A}(F)+\mathscr{A}(G)$. Given two operators on structure types $\mathscr{A}$ and $\mathscr{B}$ we define their sum as
\[
(\mathscr{A}+\mathscr{B})F=\mathscr{A}F+\mathscr{B}F
\]
their product as
\[
(\mathscr{A}\cdot \mathscr{B})F=\mathscr{A}F\cdot \mathscr{B}F,
\]
and their composition as 
\[
(\mathscr{A}\circ \mathscr{B})F=\mathscr{A}(\mathscr{B}F),
\]
where $F$ denotes an arbitrary structure type.

We then define operators on integral structure types by replacing $\mathfrak{Fin}[Z]$ by $\mathfrak{Int}[Z]$ in the definition of an operator on structure types (\ref{odst}). A general operator on integral structure types will then be of the form $\mathscr{A}=(\mathscr{A}^+,\mathscr{A}^-)$ with $\mathscr{A^{\pm}}$ operators on structure types, so that
\[
\mathscr{A}\Phi(S)=(\mathscr{A}^+\Phi^+(S),\mathscr{A}^-\Phi^-(S)).
\]
Since the differentiation operator $D$ essentially corresponds to an operation on the \emph{sets} upon which structure types act (rather than the structure types themselves), there is only one sensible way to extend differentiation to integral structure types, namely, by setting
\[
D\Phi(S)=\Phi(S\sqcup \{\star\})=(\Phi^+(S\sqcup \{\star\}),\Phi^-(S\sqcup \{\star\}))=(D\Phi^+(S),D\Phi^-(S)).
\] 
Certainly any operator $\mathscr{A}$ on structure types extends to an operator on integral structure types in the same way as the differentiation operator $D$, namely, as $(\mathscr{A},\mathscr{A})$. Operators on integral structure types of the form $(\mathscr{A},\mathscr{A})$ will then be referred to as \emph{pure}, and operators of the form $(\mathscr{A}^+,\mathscr{A}^-)$ with $\mathscr{A}^+\neq \mathscr{A}^-$ will be referred to as \emph{mixed}. If $\mathscr{A}$ is an operator on structure types its associated pure operator on integral structure types will often be denoted simply by $\mathscr{A}$ as well. We may associate with two pure operators $\mathscr{A}$ and $\mathscr{B}$ on integral structure types the mixed operator $[\mathscr{A},\mathscr{B}]=(\mathscr{A}\circ \mathscr{B},\mathscr{B}\circ \mathscr{A})$, which we will refer to as the \emph{commutator} of $\mathscr{A}$ and $\mathscr{B}$. We then have
\[
[\mathscr{A},\mathscr{B}]F(z)=(\mathscr{A}\circ \mathscr{B})F(z)-(\mathscr{B}\circ \mathscr{A})F(z).
\]
If the a priori mixed operator $[\mathscr{A},\mathscr{B}]$ is in fact pure, we say $\mathscr{A}$ and $\mathscr{B}$ \emph{commute}. In working with commutators, in some sense it seems more natural to work with virtual structure types as opposed to integral structure types, as evidenced by the following 
\begin{example}
Let $M_Z$ denote the operator on structure types given by
\[
M_ZF(S)=\bigsqcup_{s\in S}F(S\setminus s). 
\]   
From the definition of product of structure types we have
\[
M_ZF=Z\cdot F,
\] 
since putting an $M_ZF$ structure on $S$ corresponds to partitioning $S$ into a singleton and its complement, putting a $Z$ structure on the singleton and then putting an $F$-structure on its complement. One then has
\begin{equation}\label{sti}
(D\circ M_Z)F\cong F+(M_Z\circ D)F,
\end{equation}
which is a lift to the level of structure types of the power series identity
\[
\left[\frac{d}{dz},m_z\right]f(z)=\left(\frac{d}{dz}\circ m_z-m_z\circ \frac{d}{dz}\right)f(z)=f(z),
\]
where $m_z$ denotes the operator multiplication by $z$. Then viewing $D$ and $M_Z$ as pure operators on integral structure types, the ismorphism (\ref{sti}) yields
\[
[D,M_Z]F\cong (F+(M_Z\circ D)F,(M_Z\circ D)F)\sim F-\bold{0},
\] 
where $\sim$ denotes the equivalence relation defining virtual structure types. The commutator $[D,M_Z]$ at the level of integral structure types then carries the extra term $M_Z\circ D$ in both its positive and negative components, while the associated virtual structure type `cancels' the term $M_Z\circ D$.
\end{example}

We now set out to define an operator on a sub-class of structure types which we refer to as `regular'. The extension of this operator to a pure operator on integral structure types will be useful in \S\ref{CCC}. A structure type $F$ is said to be \emph{regular} if every element of $F(S)$ has no non-trivial symmetries for all finite sets $S$. More precisely, let $F$ be a structure type, $S$ be an $n$-element set, and let $\mathcal{S}_n$ denote the symmetric group on $n$ elements. Then there exists an action of $\mathcal{S}_n$ on $F(S)$ by taking an $F$-structure $x\in F(S)$ to the $F$-structure $sx\in F(S)$ obtained by permuting the underlying elements of $S$ according to the group element $s\in \mathcal{S}_n$. We then define the \emph{automorphism} group of $x\in F(S)$ as
\[
\text{Aut}(x)=\{s\in \mathcal{S}_n \hspace{1mm}| \hspace{1mm} sx=x\}\subset \mathcal{S}_n.
\]
An $F$-structure $x\in F(S)$ is then said to have no non-trivial symmetries when its automorphism group consists only of the identity permutation. A structure type $F$ is then regular if for every finite set $S$ we have $\chi(\text{Aut}(x))=1$ for all $x\in F(S)$. Crucial for what follows will be the fact that if $F_{(n)}$ is a regular structure type of degree $n$, and $G$ is a subgroup of $\mathcal{S}_n$, then $F_{(n)}(S)/G$ is isomorphic to $H_{(n)}(S)$ for some degree-$n$ structure type $H_{(n)}$ \cite{BergeronEA}.   

So let $F$ be a regular structure type, with expansion
\[
F=(F_{(0)},F_{(1)},\ldots, F_{(n)},\ldots).
\]
Certainly each $F_{(n)}$ is regular for all $n\in \mathbb{N}$. Now let $S$ be a set with $n-1$ elements. We then denote by $\mathscr{X}$ the operator given by  
\begin{equation}\label{dno}
\mathscr{X}F(S)=F_{(n)}(S\sqcup \{\star\})/\mathbb{Z}_n.
\end{equation}
As previously mentioned, the regularity of $F_{(n)}$ implies that $F_{(n)}(S\sqcup \{\star\})/\mathbb{Z}_n$ is isomorphic to $H_{(n)}(S\sqcup \{\star\})$ for some degree-$n$ structure type $H_{(n)}$, so that $\mathscr{X}F(S)=H_{(n)}(S\sqcup \{\star\})$ (and is thus a well-defined structure type). In spite of its abstract definition, $\mathscr{X}$ admits a very simple description when acting on linear orderings. In particular, let $S$ denote a set with $n-1$ elements and denote by $C^n$ the structure type of being a cycle of length $n$. Then
\begin{equation}\label{no}
\mathscr{X}Z^n(S)=Z^n(S\sqcup \{\star\})/\mathbb{Z}_n\cong C^n(S\sqcup \star)\cong Z^{n-1}(S),
\end{equation}
thus $\mathscr{X}Z^n\cong Z^{n-1}$ (so that $H_{(n)}=C^n$ in this case).

\section{Integral stuff types}\label{STF}
Baez and Dolan introduced a way to repackage the information contained in a structure type which is amenable to further generalization, and is more aptly suited to categorify evaluation of power series at a number \cite{BaezDolan}.  In particular, we first recall that a \emph{groupoid} is a category for which all the morphisms are in fact \emph{iso}morphisms. Now given a structure type $F$, for every finite set $S$ we may think of the elements of $F(S)$ as lying in a fiber over $S$, and the collection of all such fibers forms the objects of a groupoid $\bold{X}_F$ lying over $\mathfrak{Fin}_0$. For every bijection $S\to T$ in $\mathfrak{Fin}_0$, there exist isomorphisms between $F$-structures in the fibers over $S$ and $T$ in $\bold{X}_F$ corresponding to relabeling the elements of $S$ in $F$-structures on $S$ by elements of $T$ according to the bijection $S\to T$. Such isomorphisms are precisely the morphisms in the groupoid $\bold{X}_F$. We may then think of a structure type $F$ as a functor between groupoids
\[
F:\bold{X}_F\to \mathfrak{Fin}_0,
\] 
where an $F$-structure is taken to its underlying set, and ismorphisms of $F$-structures are taken to bijections between the underlying sets of the $F$-structures.

With the above construction in mind, we define a \emph{stuff type} $F$ to be a functor between groupoids
\[
F:\bold{X}\to \mathfrak{Fin}_0,
\]
such that for every bijection $S\to T$ of finite sets there exists a commutative diagram
\[
\xymatrix{
F^{-1}(S) \ar[d] \ar[r]  & F^{-1}(T) \ar[d] \\
S \ar[r] & T \\
}
\] 
Not all stuff types are structure types. In particular, structure types are precisely the stuff types $F:\bold{X}\to \mathfrak{Fin}_0$ for which the functor $F$ is \emph{faithful}, which means that given two objects $A$ and $B$ of $\bold{X}$ the associated map
\[
\text{Hom}(A,B)\to \text{Hom}(F(A),F(B))
\]
is injective. Faithfulness of structure types then follows from the fact that morphisms between objects in a structure type $F:\bold{X}\to \mathfrak{Fin}_0$ always correspond to a relabeling of the elements of its underlying set, so that there is a one to one correspondence between morphisms in a structure type $\bold{X}$ and morphisms in $\mathfrak{Fin}_0$. To define a generating series associated with an arbitrary stuff type which generalizes the generating series of a structure type, we need to recall the concept of `groupoid cardinality', which was first introduced in \cite{BaezDolan}. 

Given a groupoid $\mathcal{G}$, denote the set of isomorphisms classes of objects in $\mathcal{G}$ by $[\mathcal{G}]$. The \emph{groupoid cardinality} of $\mathcal{G}$ is then given by
\[
\chi(\mathcal{G})=\sum_{[x]\in [\mathcal{G}]}\frac{1}{\chi(\text{Aut}(x))},
\] 
where $\text{Aut}(x)$ denotes the set of automorphisms of the object $x$ in $\mathcal{G}$. When the above sum diverges we set $\chi(\mathcal{G})=\infty$, and groupoids for which the above sum converges are referred to as \emph{tame}. For all groupoids $\mathcal{G}$ and $\mathcal{H}$ we then have
\[
\chi(\mathcal{G}+\mathcal{H})=\chi(\mathcal{G})+\chi(\mathcal{H}), \quad \text{and} \quad \chi(\mathcal{G}\times \mathcal{H})=\chi(\mathcal{G})\cdot \chi(\mathcal{H}),
\]
where $\mathcal{G}+\mathcal{H}$ denotes direct sum and $\mathcal{G}\times \mathcal{H}$ denotes the product of groupoids.

If $F:\bold{X}\to \mathfrak{Fin}_0$ is a stuff type, the groupoid $\bold{X}$ is then a direct sum of groupoids
\begin{equation}\label{est}
\bold{X}=\sum_{n=0}^{\infty}\bold{X}_n,
\end{equation}
where $\bold{X}_n$ is the union of the fibers over all $n$-element sets. Equation (\ref{est}) is then referred to as the \emph{expansion} of the stuff type $\bold{X}$. The stuff type $\bold{X}_n\to \mathfrak{Fin}_0$ is then said to be the \emph{degree}-$n$ component of $\bold{X}$. In particular, if $\bold{X}=\bold{X}_n$ (so that $\bold{X}_k$ is the empty groupoid for $k\neq n$), then $\bold{X}$ is said to be of degree $n$. We refer to a stuff type $\bold{X}$ as \emph{relatively tame} if $\bold{X}_n$ is tame for all $n\in \mathbb{N}$. With every relatively tame stuff type $F:\bold{X}\to \mathfrak{Fin}_0$ we then associate the formal power series
\begin{equation}\label{gsst}
\bold{X}(z)=\sum_{n=0}^{\infty}\chi(\bold{X}_n)z^n,
\end{equation}
which we refer to as the \emph{generating series} for the stuff type $\bold{X}$. Note that stuff types which are in fact structure types are always relatively tame, and that the groupoid cardinality of a stuff type $\bold{X}$ coincides with $\bold{X}(1)$.

To make the connection with generating series for structure types, suppose $F:\bold{X}\to \mathfrak{Fin}_0$ is a stuff type which is in fact a structure type. The fact that its generating series as a structure type coincides with its generating series as a stuff type as given by (\ref{gsst})  follows from the fact that the degree-$n$ component $\bold{X}_n$ of $\bold{X}$ is in fact isomorphic to the action groupoid $F(S)//\mathcal{S}_n$, where $S$ is an $n$-element set, $F(S)$ is its image when viewing $F$ as a structure type, and $\mathcal{S}_n$ denotes the symmetric group on $n$ elements. The action groupoid $F(S)//\mathcal{S}_n$ has the elements of $F(S)$ as its objects, with an arrow between two objects precisely when they differ by a relabeling under the action of $\mathcal{S}_n$ on its underlying set. Notice that these morphisms exist in $\bold{X}$ as well, since the elements of $F(S)$ are also objects of $\bold{X}_n$, and a bijection $S\to S$ induces morphisms between the objects $F(S)$ of $\bold{X}_n$ which differ by a permutation of its underlying set. The connected components corresponding to the isomorphism classes of $\bold{X}_n$ are then in one-to one correspondence with isomorphism classes in the action groupoid $F(S)//\mathcal{S}_n$. One may then show that the groupoid cardinality of the action groupoid $F(S)//\mathcal{S}_n$ is $\chi(F(S))/\chi(\mathcal{S}_n)$, thus
\[
\chi(\bold{X}_n)=\chi(F(S)//\mathcal{S}_n)=\frac{F_n}{n!},
\] 
where $F_n=\chi(F(S))$ is as denoted in the exponential generating series for $F$ as a structure type.

Given two stuff types $\bold{X}\to \mathfrak{Fin}_0$ and $\bold{Y}\to \mathfrak{Fin}_0$ we define their sum $(\bold{X}+\bold{Y})\to \mathfrak{Fin}_0$ via the expansion 
\[
\bold{X}+\bold{Y}=\sum_{n=0}^{\infty}(\bold{X}_n+\bold{Y}_n),
\]
where $\bold{X}_n+\bold{Y}_n$ denotes the direct sum of the degree-$n$ components of  $\bold{X}$ and $\bold{Y}$. The product of $\bold{X}\to \mathfrak{Fin}_0$ and $\bold{Y}\to \mathfrak{Fin}_0$ is then given by
\[
\bold{X}\cdot \bold{Y}=\sum_{n=0}^{\infty}\left(\sum_{i+j=n}\bold{X}_i\times \bold{Y}_j\right),
\]
where $\bold{X}_i\times \bold{Y}_j$ denotes the product of $\bold{X}_i$ and $\bold{Y}_j$ in the category of groupoids. As for the associated map $\bold{X}\cdot \bold{Y}\to \mathfrak{Fin}_0$, note that composing the the first and second projections of the product $\bold{X}_i\times \bold{Y}_j$ with the maps $\bold{X}_i\to \mathfrak{Fin}_0$ and $\bold{Y}_j\to \mathfrak{Fin}_0$  yield maps onto finite sets of cardinality $i$ and $j$ respectively (with $i+j=n$), so that taking the disjoint union of the image of these maps yields an $n$-element set. It is then straightforward to show that with such prescriptions we recover the notions of sum and product of structure types. As groupoid cardinality is additive with respect to direct sum and multiplicative with respect to products, if $\bold{X}$ and $\bold{Y}$ are both relatively tame we then have
\[
(\bold{X}+\bold{Y})(z)=\bold{X}(z)+\bold{Y}(z) \quad \text{and} \quad (\bold{X}\cdot \bold{Y})(z)=\bold{X}(z)\cdot \bold{Y}(z).
\]
 
We now interpret integral structure types in the language of stuff types, which leads to the notion of an `integral stuff type'. For this, we define an \emph{integral groupoid} to be an ordered pair of groupoids $\mathcal{G}=(\mathcal{G}^+,\mathcal{G}^-)$, and we refer to $\mathcal{G}^+$ as the \emph{positive part} and $\mathcal{G}^-$ as the \emph{negative part} of $\mathcal{G}$ respectively. Given two integral groupoids $\mathcal{G}=(\mathcal{G}^+,\mathcal{G}^-)$ and $\mathcal{H}=(\mathcal{H}^+,\mathcal{H}^-)$, we define their direct sum as
\[
\mathcal{G}+\mathcal{H}=(\mathcal{G}^++\mathcal{H}^+,\mathcal{G}^-+\mathcal{H}^-),
\]
and their product as
\[
\mathcal{G}\times \mathcal{H}=((\mathcal{G}^+\times \mathcal{H}^+)+(\mathcal{G}^-\times \mathcal{H}^-),(\mathcal{G}^+\times \mathcal{H}^-)+(\mathcal{G}^-\times \mathcal{H}^+)).
\]
We then define the cardinality of an integral groupoid $\mathcal{G}=(\mathcal{G}^+,\mathcal{G}^-)$ to be
\[
\chi(\mathcal{G})=\chi(\mathcal{G}^+)-\chi(\mathcal{G}^-),
\]
and $\mathcal{G}$ is said to be \emph{relatively tame} if both its positive and negative parts are. 

An \emph{integral stuff type} is then given by a covariant functor
\[
\bold{\Phi}\to \mathfrak{Fin}_0,
\]
with $\bold{\Phi}=(\bold{\Phi}^+,\bold{\Phi}^-)$ an integral groupoid, and $\bold{\Phi}^{\pm}\to \mathfrak{Fin}_0$ both being stuff types. If $\bold{\Phi}=(\bold{\Phi}^+,\bold{\Phi}^-)$ is relatively tame its associated generating series is given by
\[
\bold{\Phi}(z)=\bold{\Phi}^+(z)-\bold{\Phi}^-(z).
\] 
The expansion of an integral stuff type $\bold{\Phi}=(\bold{\Phi}^+,\bold{\Phi}^-)$ is given by
\[
\bold{\Phi}=\sum_{n=0}^{\infty}\bold{\Phi}_n,
\]
where $\bold{\Phi}_n=(\bold{\Phi}^+_n,\bold{\Phi}^-_n)$. With such prescriptions, the sum and product of two integral stuff types are defined in exactly the same manner as stuff types, from which it follows
\[
(\bold{\Phi}+\bold{\Psi})(z)=\bold{\Phi}(z)+\bold{\Psi}(z),\quad \text{and} \quad (\bold{\Phi}\cdot \bold{\Psi})(z)=\bold{\Phi}(z)\cdot \bold{\Psi}(z).
\]

\section{A combinatorial description for Chern classes of projective hypersurfaces}\label{CCC}
We now use operators on integral structure types to give a combinatorial description for Chern classes of projective space and all of its smooth hypersurfaces. So let $\mathbb{P}^n$ denote projective $n$-space over $\C$, and let $X$ be a smooth complex projective hypersurface, i.e., a subset of $\mathbb{P}^n$ corresponding to the zero locus of a homogeneous polynomial $F(x_0,...,x_n)\in \C[x_0,...,x_n]$ such that the system of equations
\[
\frac{\partial F}{\partial x_i}=0, \quad \quad i=0,...,n
\] 
has no solutions. As a topological space $X$ is of dimension $2(n-1)$, and is a complex manifold of complex dimension $n-1$. A fundamental invariant of the embedding $X\hookrightarrow \mathbb{P}^n$ is the \emph{degree} of $X$, which is simply the degree of the homogeneous polynomial defining $X$. Geometrically, the degree of $X$ is the number of points of intersection of $X$ with $n-1$ hyperplanes in general position, i.e.,
\[
\text{deg}(X)=X\cap H_1\cap \cdots \cap H_{n-1},
\]
where $H_i$ denotes a general hyperplane in $\mathbb{P}^n$ (i.e., a hypersurface of degree 1). Many fundamental invariants of $X$ are determined by its degree, such as its Hodge structure and Chern classes. In particular, its topological Euler characteristic $\chi(X)$ is a polynomial in its degree \cite{IT}, given by
\begin{equation}\label{ecf}
\chi(X)=-\sum_{k=0}^{n-1}\binom{n+1}{k}(-d)^{n-k},
\end{equation}
where $d$ denotes the degree of $X$. The appearance of binomial coefficients in the formula for the Euler characteristic of $X$ along with the fact that the Euler characteristic is the unique measure on toplogical spaces which extends the cardinality function of finite sets hints at the fact that the Euler characteristic is in fact combinatorial in nature. In this direction, we invoke the use of operators on integral structure types to give a combinatorial description of the Euler characteristic of $X$ and its relation to its \emph{total} Chern class. 

For those not familiar with Chern classes in algebraic geometry, we first recall that one interpretation of the Euler characteristic of a variety is that it counts the zeros of a non-trivial holomorphic vector field on it (with either positive or negative multiplicities). For example, a flow of charge on the Riemann sphere must admit two poles where the charge vanishes, which comes from the fact that the Euler characteristic of the Riemann sphere is 2. The Euler characteristic is then \emph{zero-dimensional information}. To generalize this to higher-dimensional information, one may consider $k$ general vector fields on a variety and determine the locus for which the $k$ vector fields are necessarily linearly dependent (over $\mathbb{C}$). In general, the locus where $k$ vector fields are linearly dependent on a variety is of complex dimension $k-1$, so if $m$ is the dimension of a variety $V$, we have a sequence
\begin{equation}\label{cce}
c_0(V)+c_1(V)+\cdots +c_m(V),
\end{equation}
where $c_i(V)$ denotes the locus in $V$ where $m+1-i$ vector fields are necessarily linearly dependent. Taking such loci up to (co)homological equivalence, $c_i(V)$ is then referred to as the $i$th \emph{Chern class} of $V$, and the homological sum \eqref{cce} is then referred to as the \emph{total Chern class} of $V$, which we denote by $c(V)$ (for an introductory (and more rigorous) account of Chern classes in algebraic geometry we recommend \cite{Teitler}). And since the Euler characteristic of $V$ corresponds to $c_m(V)$, the Euler characteristic is often referred to as the \emph{top Chern class} of $V$ (though technically, the top Chern class $c_m(V)$ is the Euler characteristic of $V$ times the homological class of a point). In what follows, we construct a structure type on sets of cardinality less than or equal to $n+1$ whose generating series may be viewed as the total Chern class of $\mathbb{P}^n$, and then show how the total Chern classes of \emph{all} smooth hypersurfaces in $\mathbb{P}^n$ may be recovered from this structure type. But first, we recall certain combinatorial aspects of $\mathbb{P}^n$ which will be crucial for our construction.  

We first describe how the total Chern class of $\mathbb{P}^n$ may be viewed as a generating function for the number of linear subspaces in a `skeletal' version of $\mathbb{P}^n$, which may be modeled by an $n$-simplex. In particular, one may consider projective $n$-space over a field $\mathbb{F}_q$ with $q=p^k$ elements (with $p$ a prime number), whose points correspond to one-dimensional subspaces of an $n+1$-dimensional vector space over $\mathbb{F}_q$, which we denote by $\mathbb{F}_q^{n+1}$. The number $N_q(n+1,k)$ of $k$-dimensional linear subspaces of $\mathbb{F}_q^{n+1}$ then corresponds to the number of embedded $\mathbb{P}^{k-1}$s in $\mathbb{P}^n$ over $\mathbb{F}_q$, which is given by 
\[
N_q(n+1,k)=\binom{n+1}{k}_q=\frac{(1-q^{n+1})(1-q^n)\cdots (1-q^{n+2-k})}{(1-q)(1-q^2)\cdots (1-q^k)},
\]
where $\binom{n+1}{k}_q$ is the $q$-analogue of the binomial coefficient $\binom{n+1}{k}$. Setting $q=1$ then corresponds to counting the number of $(k-1)$-dimensional linear subspaces of projective $n$-space over $\mathbb{F}_1$, i.e., the `field' with one element. We then refer to $\mathbb{P}^n$ over $\mathbb{F}_1$ as the \emph{skeletal} projective $n$-space, denoted $\mathfrak{P}^n$, which has
\[
\binom{n+1}{1} \text{ points ($\mathfrak{P}^0$s),} \quad \binom{n+1}{2} \text{ lines ($\mathfrak{P}^1$s),} \ldots , \quad \text{and} \quad \binom{n+1}{n} \text{ hyperplanes ($\mathfrak{P}^{n-1}$s).}  
\]
As such, the geometry of skeletal projective space $\mathfrak{P}^n$ is completely captured by the combinatorics of  an $n$-simplex, which has
\[
\binom{n+1}{1} \text{ vertices,} \quad \binom{n+1}{2} \text{ edges,} \ldots,  \quad \text{and} \quad \binom{n+1}{n} \text{ $(n-1)$-dimensional faces,} 
\]
so that each $k$-dimensional face of the $n$-simplex corresponds to an embedded $\mathfrak{P}^k$ inside of $\mathfrak{P}^n$ (we give an illustration of $\mathfrak{P}^3$ in Figure~\ref{M1}). We point out that the number of points in $\mathfrak{P}^n$ coincides with the Euler characteristic of $\mathbb{P}^n$ over $\C$,  namely, $n+1$.
  
Now denote the integral cohomology ring of complex projective $n$-space by $A_*\mathbb{P}^n$, which one may identify with its Chow group of algebraic cycles modulo rational equivalence \cite{IT}. If we denote the class of a hyperplane in $\mathbb{P}^n$ by $H$, we have
\[
A_*\mathbb{P}^n\cong \mathbb{Z}[H]/(H^{n+1}),
\] 
so that cohomology classes in $\mathbb{P}^n$ are just polynomials in $H$ with integer coefficients. It is well-known that the total Chern class of $\mathbb{P}^n$ is given by
\[
c(\mathbb{P}^n)=(1+H)^{n+1}(\text{mod}\hspace{1mm}H^{n+1})=\sum_{i=0}^n\binom{n+1}{i}H^i,
\]
from which one can see $\chi(\mathbb{P}^n)=n+1$ (since $H^i$ is a class of dimension $n-i$, $\chi(\mathbb{P}^n)$ coincides with the coefficient of $H^n$). By the preceding discussion, we may view the total Chern class of $\mathbb{P}^n$ as a generating function, where the coefficient of $H^i$ counts the number of embedded $\mathfrak{P}^{(n-i)}$s in $\mathfrak{P}^n$, or rather, the number of $(n-i)$-dimensional faces in the $n$-simplex. We now use the combinatorial structure of skeletal projective space to show how the total Chern class of $\mathbb{P}^n$ may be viewed as the generating series of a structure type. We first give a concrete example in the case of $\mathfrak{P}^3$, which immediately generalizes to $\mathfrak{P}^n$ for arbitrary $n$.

\begin{figure}
\centering
\begin{tikzpicture}

\tikzset{vertex/.style = {shape=circle,draw,minimum size=1.5em}}
\tikzset{edge/.style = {->,> = latex}}
\node[vertex] (a) at  (0,0) {$x_0$};
\node[vertex] (b) at  (2,0) {$x_1$};
\node[vertex] (c) at  (0,2) {$x_2$};
\node[vertex] (d) at  (-1.5,-1.5) {$x_3$};

\draw (a) -- (b);
\draw (b) -- (c);
\draw (a) -- (d);
\draw (c) -- (d);
\draw (a) -- (c);
\draw (d) -- (b);

\end{tikzpicture}

\caption{Skeletal projective 3-space $\mathfrak{P}^3$} \label{M1}
\end{figure}
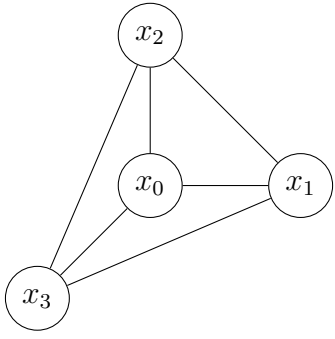

So let $\mathfrak{C}^3$ be the structure type of being an embedded linear subspace of $\mathfrak{P}^3$ equipped with a linear ordering on it (by linear subspace we mean that it is isomorphic to $\mathfrak{P}^k$ for some $k\leq 3$). For example, $\mathfrak{C}^3(\{\heartsuit,\clubsuit,\spadesuit\})$ consists of all skeletal $\mathfrak{P}^3$s with three distinct vertices labeled by $\{\heartsuit,\clubsuit,\spadesuit\}$ together with an ordering on $\{\heartsuit,\clubsuit,\spadesuit\}$, which may be represented by replacing two undirected edges in the simplex corresponding to $\mathfrak{P}^3$ by directed edges corresponding to the ordering of $\{\heartsuit,\clubsuit,\spadesuit\}$ (we sketch an element of $\mathfrak{C}^3(\{\heartsuit,\clubsuit,\spadesuit\})$ in Figure~\ref{M2}). As such, for a finite set $S$, $\mathfrak{C}^3(S)$ may be identified with the set of injective maps of the form $S\to \{x_0,x_1,x_2,x_3\}$. The structure type $\mathfrak{C}^3$ then admits the expansion
\[
\mathfrak{C}^3=(\bold{1},\mathfrak{C}^3_{(1)},\mathfrak{C}^3_{(2)}, \mathfrak{C}^3_{(3)}, \mathfrak{C}^3_{(4)},\bold{0}, \bold{0},\ldots),
\] 
where $\mathfrak{C}^3_{(k)}$ is the degree $k$ structure type of being an embedded $\mathfrak{P}^{k-1}$ in $\mathfrak{P}^3$ with a linear ordering for $k=1,2,3,4$. If $S$ is a $k$-element set we then have 
\[
\chi(\mathfrak{C}^3_{(k)}(S))=\frac{4!}{(4-k)!},
\]
thus the generating series for $\mathfrak{C}^3$ is given by
\[
\mathfrak{C}^3(z)=\sum_{k=0}^4\frac{4!}{k!(4-k)!}z^k=\sum_{k=0}^4\binom{4}{k}z^k=(1+z)^4,
\]
from which it follows  
\[
c(\mathbb{P}^3)=\mathfrak{C}^3(H)(\text{mod}\hspace{1mm}H^{4})\in A_*\mathbb{P}^n.
\]
We note that the elements of $\mathfrak{C}^3_{(k)}(S)$ are in bijective correspondence with length-$k$ flags of linear subspaces of $\mathfrak{P}^3$. For example, if we choose the linear ordering $(z,y,x)$ on the set $\{x,y,z\}$, then the embedding of $\{x,y,z\}$ into a $\mathfrak{P}^2\subset \mathfrak{P}^3$ given by $z\mapsto x_0$, $y\mapsto x_1$, $x\mapsto x_3$ then induces the length-3 flag of linear subspaces $\mathfrak{P}^0\subset \mathfrak{P}^1\subset \mathfrak{P}^2$ in $\mathfrak{P}^3$, where
\[
\mathfrak{P}^0=\{x_0\}, \quad \mathfrak{P}^1=\{x_0,x_1\}, \quad \text{and} \quad \mathfrak{P}^2=\{x_0,x_1,x_3\}.
\]

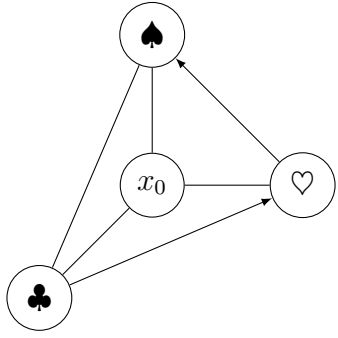
\begin{figure}
\centering
\begin{tikzpicture}

\tikzset{vertex/.style = {shape=circle,draw,minimum size=1.5em}}
\tikzset{edge/.style = {->,> = latex}}
\node[vertex] (a) at  (0,0) {$x_0$};
\node[vertex] (b) at  (2,0) {$\heartsuit$};
\node[vertex] (c) at  (0,2) {$\spadesuit$};
\node[vertex] (d) at  (-1.5,-1.5) {$\clubsuit$};

\draw (a) -- (b);
\draw[edge] (b) to (c);
\draw (a) -- (d);
\draw (c) -- (d);
\draw (a) -- (c);
\draw[edge] (d) to (b);

\end{tikzpicture}

\caption{An element of $\mathfrak{C}^3(\{\heartsuit,\clubsuit,\spadesuit\})$} \label{M2}
\end{figure}

For general $n$ the construction is the same as for $n=3$, that is, we define $\mathfrak{C}^n$ to be the structure of being an embedded linear subspace of $\mathfrak{P}^n$ equipped with a linear ordering, which again may be indentitied with the set of flags of linear subspaces of $\mathfrak{P}^n$. As in the $n=3$ case, for general $n$ the structure type $\mathfrak{C}^n$ admits the expansion
\[
\mathfrak{C}^n=(\bold{1},\mathfrak{C}^n_{(1)},\mathfrak{C}^n_{(2)},\ldots , \mathfrak{C}^3_{(n+1)},\bold{0}, \bold{0},\ldots),
\] 
where where $\mathfrak{C}^n_{(k)}$ is the degree $k$ structure type of being an embedded $\mathfrak{P}^{k-1}$ in $\mathfrak{P}^n$ equipped with a linear ordering for $k=1,2,\ldots, n+1$. We then have $\mathfrak{C}^n(z)=(1+z)^{n+1}$, thus
\[
c(\mathbb{P}^n)=\mathfrak{C}^n(H)(\text{mod}\hspace{1mm}H^{n+1}).
\]
We now show how the total Chern class of all smooth hypersurfaces in $\mathbb{P}^n$ may be recovered by $\mathfrak{C}^n$ as well.

For this, let $\mathfrak{e}_n(z)$ denote the polynomial which yields the Euler characteristic of smooth hypersurfaces in $\mathbb{P}^n$ upon evaluation at its degree, which by \eqref{ecf} is given by
\[
\mathfrak{e}_n(z)=-\sum_{k=0}^{n-1}\binom{n+1}{k}(-z)^{n-k}.
\] 
Now let $\vartheta$ be the operator on the formal power series ring $\mathbb{Z}[\![z]\!]$ which throws away the linear part of a formal power series and divides the result by $-z$, so that 
\begin{equation}\label{VO}
\vartheta\left(\sum_{n=0}^{\infty}a_nz^n\right)=-\sum_{n=1}^{\infty}a_{n+1}z^n.
\end{equation}
We then have
\[
\mathfrak{e}_n(z)=\vartheta\left(-(1-z)^{n+1}\right),
\]
thus after loosely associating $-(1-z)^{n+1}$ with the total Chern class $(1+H)^{n+1}$ of $\mathbb{P}^n$ (since up to minus signs they contain the same information), $\mathfrak{e}_n(z)$ -- or the top Chern class of a smooth hypersurface in $\mathbb{P}^n$ -- is obtained from the total Chern class of $\mathbb{P}^n$ via the operator $\vartheta$ (after evaluation at its degree). Moreover, for $X$ a smooth hypersurface of degree $d$ in $\mathbb{P}^n$, it is well-known (e.g. via the adjunction formula)
\[
c(X)=\frac{dH(1+H)^{n+1}}{1+dH} (\text{mod}\hspace{1mm}H^{n+1})\in  A_*\mathbb{P}^n,
\]
from which an elementary inductive argument yields
\begin{equation}\label{FCCE}
c(X)=\vartheta^{n-1}\mathfrak{e}_n(d)H+\vartheta^{n-2}\mathfrak{e}_n(d)H^2+\cdots+\vartheta\mathfrak{e}_n(d)H^{n-1}+\mathfrak{e}_n(d)H^{n}.
\end{equation}
As such, it follows that not only the Euler characteristic/top Chern class, but the \emph{total} Chern class of all smooth hypersurfaces in $\mathbb{P}^n$ may be obtained by evaluation at their degree of iterations of $\vartheta$ on $-(1-z)^{n+1}$ (which we loosely associate with the total Chern class of $\mathbb{P}^n$). We now lift this statement to the level of integral structure types. 

So let $\mathscr{C}^n=(\mathscr{C}^{+},\mathscr{C}^-)$ be the integral structure type given by
\begin{equation}\label{AFD}
\mathscr{C}^+=(\bold{0},\mathfrak{C}^n_{(1)},\bold{0},\mathfrak{C}^n_{(3)}, \bold{0},\ldots, \mathfrak{C}^n_{(2m+1)},\ldots), \quad \mathscr{C}^-=(\bold{1},\bold{0},\mathfrak{C}^n_{(2)},\bold{0},\mathfrak{C}^n_{(4)}, \bold{0}, \ldots, \mathfrak{C}^n_{(2m)},\ldots),
\end{equation}
 so that $\mathscr{C}^n$ contains the same information as $\mathfrak{C}^n$, but with the odd-degree components of $\mathfrak{C}^n$ weighted positively and the even-degree components of $\mathfrak{C}^n$ weighted negatively. It then follows
 \[
 \mathscr{C}^n(z)=-(1-z)^{n+1},
 \]
 so that 
 \begin{equation}\label{LGTSD}
 \vartheta\left(\mathscr{C}^n(z)\right)=\mathfrak{e}_n(z).
 \end{equation}

We now define an operator $\mathscr{V}$ which lifts the operator $\vartheta$ to an operator on integral structure types we refer to as \emph{regular}. Recall that in \S\ref{OIST} we referred to a structure type $F$ as regular if for every finite set $S$ we have $\chi(\text{Aut}(x))=1$ for all $x\in F(S)$, and we extend the notion of regularity to integral structure types by saying an integral structure type $\Phi=(\Phi^+,\Phi^-)$ is regular if and only if both its positive and negative parts are regular structure types. Now certainly the structure type $\mathfrak{C}^n$ is regular, since permuting the elements of a $k$-element set $S$ doesn't preserve its orderings. It then follows that the associated integral structure type $\mathscr{C}^n$ is regular, so the operator $\mathscr{V}$ we now define for regular integral structure types will be able to act on $\mathscr{C}^n$. 

Now let $\Phi_{(k)}=(\Phi_{(k)}^+,\Phi_{(k)}^-)$ be a regular integral structure type of degree $k$. The operator $\mathscr{V}$ is then given by
\begin{equation}\label{TO}
\mathscr{V}\Phi_{(k)}=
\begin{cases}
\hspace{1.1cm} \bold{0} \quad \hspace{2.05cm} \text{for $k=0,1$} \\
(\mathscr{X}\Phi_{(k)}^-,\mathscr{X}\Phi_{(k)}^+) \quad \quad \text{otherwise},
\end{cases}
\end{equation}
where $\mathscr{X}$ is the operator on regular stucture types defined in \S\ref{OIST} by equation \eqref{dno}. We now prove the following
\begin{theorem}\label{CMM}
Let $\Phi=(\Phi^+,\Phi^-)$ be a regular integral structure type, $\mathscr{V}$ be the operator on regular integral structure types given by \eqref{TO}, and let $\vartheta$ be the operator on integral power series given by \eqref{VO}. Then
\[
(\mathscr{V}\Phi)(z)=\vartheta \left(\Phi(z)\right)\in \mathbb{Z}[\![z]\!].
\]
\end{theorem}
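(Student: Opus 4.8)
The plan is to reduce the whole statement to a single coefficient computation for the operator $\mathscr{X}$, and then assemble the result degree by degree. First I would record the following key fact: if $F$ is a regular structure type whose degree-$n$ component has generating series $\frac{F_n}{n!}z^n$, then $\mathscr{X}F$ has generating series $\frac{F_n}{n!}z^{n-1}$. To see this, note by \eqref{dno} that $\mathscr{X}F$ acts on an $(n-1)$-element set $S$ by forming $F_{(n)}(S\sqcup\{\star\})/\mathbb{Z}_n$, where $S\sqcup\{\star\}$ has $n$ elements. Since $F$ is regular, every element of $F_{(n)}(S\sqcup\{\star\})$ has trivial automorphism group, so the $\mathcal{S}_n$-action on $F_{(n)}(S\sqcup\{\star\})$ has trivial stabilizers; restricting to the cyclic subgroup $\mathbb{Z}_n\subset\mathcal{S}_n$ shows that $\mathbb{Z}_n$ acts freely, whence every orbit has exactly $n$ elements. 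Therefore $\chi\big(F_{(n)}(S\sqcup\{\star\})/\mathbb{Z}_n\big)=\frac{1}{n}\chi\big(F_{(n)}(S\sqcup\{\star\})\big)=\frac{F_n}{n}$, and dividing by $(n-1)!$ to pass to the exponential generating series puts the coefficient $\frac{F_n}{n!}$ in front of $z^{n-1}$, as claimed. (The case $F=Z^n$ recovers the identity $\mathscr{X}Z^n\cong Z^{n-1}$ from \eqref{no}.)

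With this in hand I would write $\Phi^+(z)=\sum_k p_k z^k$ and $\Phi^-(z)=\sum_k q_k z^k$, so that $\Phi(z)=\sum_k(p_k-q_k)z^k$. Applying $\mathscr{V}$ degree by degree via \eqref{TO}, the degree-$0$ and degree-$1$ components of $\Phi$ are sent to the zero integral structure type, while for $k\geq 2$ the positive and negative parts are swapped: the degree-$(k-1)$ component of $(\mathscr{V}\Phi)^+$ is $\mathscr{X}\Phi^-_{(k)}$ and that of $(\mathscr{V}\Phi)^-$ is $\mathscr{X}\Phi^+_{(k)}$. By the key fact, the $k$th component contributes $q_k z^{k-1}$ to $(\mathscr{V}\Phi)^+(z)$ and $p_k z^{k-1}$ to $(\mathscr{V}\Phi)^-(z)$. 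Summing over $k\geq 2$ and reindexing $m=k-1$ yields
\[
(\mathscr{V}\Phi)(z)=(\mathscr{V}\Phi)^+(z)-(\mathscr{V}\Phi)^-(z)=\sum_{m\geq 1}(q_{m+1}-p_{m+1})z^m=-\sum_{m\geq 1}(p_{m+1}-q_{m+1})z^m.
\]

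Finally I would compare against $\vartheta(\Phi(z))$: writing $\Phi(z)=\sum_k a_k z^k$ with $a_k=p_k-q_k$, the definition \eqref{VO} gives $\vartheta(\Phi(z))=-\sum_{m\geq 1}a_{m+1}z^m=-\sum_{m\geq 1}(p_{m+1}-q_{m+1})z^m$, which is exactly the expression above. The annihilation of the degree-$0$ and degree-$1$ components by $\mathscr{V}$ matches $\vartheta$ discarding the constant and linear terms, and the positive/negative swap in \eqref{TO} is precisely what produces the overall minus sign of $\vartheta$. I expect the only genuine content to be the key fact about $\mathscr{X}$; the main obstacle is justifying the free $\mathbb{Z}_n$-action from regularity (and hence the division by $n$), after which the theorem reduces to routine bookkeeping of generating-series coefficients.
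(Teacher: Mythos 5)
Your proposal is correct, and the overall architecture (reduce to a degree-by-degree statement about $\mathscr{X}$ on generating series, then bookkeep the sign swap and the truncation in \eqref{TO} against \eqref{VO}) matches the paper's. The one place where you genuinely diverge is in how the key fact about $\mathscr{X}$ is established. The paper invokes the classification result for regular species (Bergeron et al., cited via Proposition~4.6.9 of \cite{Yorgey}): a regular structure type of degree $k$ is isomorphic to $mZ^k$, so it suffices to compute $\mathscr{X}Z^n\cong C^n$ applied to $S\sqcup\{\star\}\cong Z^{n-1}$, and then $\mathscr{V}\Phi$ is identified up to isomorphism as an explicit pair of sums of $Z^k$'s before taking generating series. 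You instead compute the cardinality of $F_{(n)}(S\sqcup\{\star\})/\mathbb{Z}_n$ directly: regularity gives trivial stabilizers for the $\mathcal{S}_n$-action, hence a free $\mathbb{Z}_n$-action with all orbits of size $n$, so the quotient has cardinality $F_n/n$ and the exponential coefficient is preserved. Your route is more elementary and self-contained --- it avoids the external classification theorem entirely and uses only what the theorem actually needs (cardinalities, not isomorphism types) --- while the paper's route yields the slightly stronger structural statement that $\mathscr{V}\Phi$ is itself a sum of linear orders, at the cost of an appeal to the literature. The orbit-counting step you flag as the main obstacle is indeed the only real content, and your justification of it is sound.
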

\begin{proof}
By a classification result for structure types (Proposition~4.6.9 in \cite{Yorgey}, which was first proved by Bergeron et al. in \cite{BergeronEA}), a regular structure type of degree $k$ is isomorphic to $mZ^k$ for some natural number $m\in \mathbb{N}$. As such, once an isomorphism is established with sums of linear orderings, an operator on a regular structure type is determined by its action on $Z^k$ for $k\in \mathbb{N}$. So let $S$ denote a set with $n-1$ elements, and let $\mathscr{X}$ be the operator defined on regular structure types in \S\ref{OIST}. In particular, if $F$ is a regular structure type,
\[
\mathscr{X}F(S)=F_{(n)}(S\sqcup \{\star\})/\mathbb{Z}_n.
\]
We then have
\begin{equation}\label{no}
\mathscr{X}Z^n(S)=Z^n(S\sqcup \{\star\})/\mathbb{Z}_n\cong C^n(S\sqcup \star)\cong Z^{n-1}(S),
\end{equation}
thus $\mathscr{X}Z^n\cong Z^{n-1}$ (the right-most isomorphism comes from the fact that the derivative of length-$n$ cycles $C^n$ is $Z^{n-1}$, which follows from a direct generalization of Example~\ref{ce1}).

Now since $\Phi=(\Phi^+,\Phi^-)$ is a regular integral structure type, by definition its positive and negative components are both regular, so by the aforementioned classification result there exists isomorphisms
\[
\Phi^+\cong (a_0\bold{1},a_1Z,a_2Z^2,\ldots), \quad \text{and} \quad \Phi^-\cong(b_0\bold{1},b_1Z,b_2Z^2,\ldots),
\]
with $a_n,b_n\in \mathbb{N}$. We then have
\[
\Phi(z)=\sum_{n=0}^\infty a_nz^n-\sum_{n=0}^{\infty} b_nz^n=\sum_{n=0}^{\infty}(a_n-b_n)z^n,
\] 
so that
\[
\vartheta(\Phi(z))=\sum_{n=1}^\infty (b_{n+1}-a_{n+1})z^{n}.
\]
On the other hand, by the definition of the operator $\mathscr{V}$ via equation \eqref{TO} we have
\[
\mathscr{V}\Phi\cong\left((\bold{0},b_2Z,b_3Z^2,\ldots),(\bold{0},a_2Z,a_3Z^2,\ldots)\right),
\]
so that
\[
(\mathscr{V}\Phi)(z)=\sum_{n=1}^\infty b_{n+1}z^n-\sum_{n=1}^\infty a_{n+1}z^n=\sum_{n=1}^\infty (b_{n+1}-a_{n+1})z^{n}=\vartheta(\Phi(z)),
\]
as desired.
\end{proof}

We then arrive at the following 
\begin{corollary}
Let $X$ be a smooth hypersurface in $\mathbb{P}^n$ of degree $d$, and let $\mathscr{C}^n$ be the integral structure type given by equation \eqref{AFD}. Then
\[
c(X)=(\mathscr{V}^n\mathscr{C}^n)(d)H+(\mathscr{V}^{n-1}\mathscr{C}^n)(d)H^2+\cdots+(\mathscr{V}^2\mathscr{C}^n)(d)H^{n-1}+(\mathscr{V}\mathscr{C}^n)(d)H^{n}.
\]
In particular, we have
\[
\chi(X)=(\mathscr{V}\mathscr{C}^n)(d).
\]
\end{corollary}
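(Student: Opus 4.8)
The plan is to reduce the statement to Theorem~\ref{CMM} together with the generating-series identities \eqref{LGTSD} and \eqref{FCCE} that are already in hand. The one structural fact that must be isolated first is that the operator $\mathscr{V}$ preserves regularity of integral structure types: by the classification result used in the proof of Theorem~\ref{CMM}, a regular integral structure type is, up to isomorphism, a pair of sums of linear orderings $Z^k$, and since $\mathscr{X}Z^n\cong Z^{n-1}$ the image $\mathscr{V}\Phi$ is again a pair of sums of linear orderings and hence regular. This observation is exactly what guarantees that the iterates $\mathscr{V}^k\mathscr{C}^n$ are well-defined regular integral structure types, so that Theorem~\ref{CMM} may be applied at every stage rather than merely once.

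Granting this, I would first prove by induction on $k\geq 1$ that $(\mathscr{V}^k\mathscr{C}^n)(z)=\vartheta^{k-1}\mathfrak{e}_n(z)$. The base case $k=1$ is Theorem~\ref{CMM} together with \eqref{LGTSD}, since $(\mathscr{V}\mathscr{C}^n)(z)=\vartheta(\mathscr{C}^n(z))=\mathfrak{e}_n(z)$, using that $\mathscr{C}^n$ is regular. For the inductive step, regularity of $\mathscr{V}^{k-1}\mathscr{C}^n$ permits a further application of Theorem~\ref{CMM}, giving $(\mathscr{V}^k\mathscr{C}^n)(z)=\vartheta\big((\mathscr{V}^{k-1}\mathscr{C}^n)(z)\big)=\vartheta\big(\vartheta^{k-2}\mathfrak{e}_n(z)\big)=\vartheta^{k-1}\mathfrak{e}_n(z)$.

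Next I would specialize this identity to $z=d$, obtaining $(\mathscr{V}^k\mathscr{C}^n)(d)=\vartheta^{k-1}\mathfrak{e}_n(d)$ for every $k\geq 1$, and then read off \eqref{FCCE} term by term. The coefficient of $H^j$ in \eqref{FCCE} is $\vartheta^{\,n-j}\mathfrak{e}_n(d)$, which equals $(\mathscr{V}^{\,n-j+1}\mathscr{C}^n)(d)$; letting $j=1,\dots,n$ recovers precisely the asserted expression for $c(X)$. For the final assertion, the coefficient of $H^n$ is $\vartheta^0\mathfrak{e}_n(d)=\mathfrak{e}_n(d)=(\mathscr{V}\mathscr{C}^n)(d)$, and since $\mathfrak{e}_n$ computes the Euler characteristic of a degree-$d$ hypersurface upon evaluation at $d$ by \eqref{ecf} (the coefficient of $H^n$ being the top Chern class, i.e.\ the Euler characteristic), we obtain $\chi(X)=(\mathscr{V}\mathscr{C}^n)(d)$.

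The only genuinely non-formal point, and hence the main obstacle, is the preservation of regularity under $\mathscr{V}$: without it one could invoke Theorem~\ref{CMM} only a single time, whereas the corollary requires the entire chain of iterates. Beyond that, the proof is a matter of carefully tracking the index shift introduced by \eqref{LGTSD}---which consumes one factor of $\vartheta$, so that $\mathscr{V}^k$ corresponds to $\vartheta^{k-1}$ rather than $\vartheta^k$---and aligning it against the reversed indexing of the powers of $H$ appearing in \eqref{FCCE}.
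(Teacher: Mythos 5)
Your proof is correct and follows essentially the same route as the paper: rewriting \eqref{FCCE} via \eqref{LGTSD} and then applying Theorem~\ref{CMM} to each iterate of $\mathscr{V}$, with the indexing handled exactly as the paper intends. The one point you make explicit that the paper leaves implicit is that $\mathscr{V}$ preserves regularity (so that Theorem~\ref{CMM} may be applied repeatedly rather than once), and your justification of this via the classification of regular structure types as sums of $Z^k$'s together with $\mathscr{X}Z^n\cong Z^{n-1}$ is sound.
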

\begin{proof}
By equation \eqref{LGTSD}, equation \eqref{FCCE} may be re-written as
\[
c(X)=\left.\left(\vartheta^n\mathscr{C}^n(z)H+\vartheta^{n-1}\mathscr{C}^n(z)H^2+\cdots +\vartheta^2\mathscr{C}^n(z)H^{n-1}+\vartheta\mathscr{C}^n(z)H^{n}\right)\right|_{z=d}.
\]
Applying Theorem~\ref{CMM} to the above equation then implies the result.
\end{proof}

\bibliographystyle{plain}
\bibliography{IST3}

\end{document}